\newcommand{\BN}{{\mathbb{N}}}
\newcommand{\BR}{{\mathbb{R}}}
\newcommand{\BC}{{\mathbb{C}}}
\newcommand{\BG}{{\mathbb{G}}}
\newcommand{\gD}{\Delta}
\newcommand{\gd}{\delta}
\newcommand{\gb}{\beta}
\newcommand{\gC}{\Gamma}
\newcommand{\gc}{\gamma}
\newcommand{\gep}{\epsilon}
\newcommand{\gl}{\lambda}
\newcommand{\ga}{\alpha}
\newcommand{\ti}[1]{\tilde{#1}}
\newcommand{\diam}{\text{diam}}
\newcommand{\GL}{\text{GL}}
\newtheorem{prop}{Proposition}[section]
\newtheorem{thm}[prop]{Theorem}
\newtheorem{lem}[prop]{Lemma}
\newtheorem{cor}[prop]{Corollary}
\theoremstyle{definition}
\newtheorem{defn}[prop]{Definition}
\newtheorem{rems}[prop]{Remarks}
\newtheorem{exam}[prop]{Example}
\long\def\@savemarbox#1#2{\global\setbox#1\vtop{\hsize\marginparwidth 
  \@parboxrestore\tiny\raggedright #2}}
\begin{document}
\author{Tsachik Gelander}

\date{\today}


\title{Limits of finite homogeneous metric spaces}

\maketitle


 

\section{introduction}
Itai Benjamini asked me if the spheres $S^2$, and in general which manifolds, can be approximated by finite homogeneous metric spaces. \footnote{The motivation came from the recent paper \cite{Itai} which studies limits of scaled transitive graphs.}
 
We will say that a complete metric space is approximable (or can be approximated) by finite homogeneous metric spaces if it is a limit of such in the Gromov--Hausdorff topology. 

\begin{thm}\label{thm}
A metric space $X$ is approximable by finite homogeneous metric spaces if and only if it admits a compact group of isometries $G$ which acts transitively and whose identity connected component $G^\circ$ is abelian.  
\end{thm}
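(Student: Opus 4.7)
The plan treats the two directions separately, using inverse-limit reductions to the Lie case in each.

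For sufficiency, suppose $X = G/H$ with $G$ compact and $G^\circ$ abelian. I would first reduce to the case that $G$ is a compact Lie group: writing $G = \varprojlim G_\alpha$ as an inverse limit of Lie quotients, the image $G_\alpha^\circ$ of $G^\circ$ is abelian and Lie, hence a torus, and the associated quotients $X_\alpha = G_\alpha/\pi_\alpha(H)$ GH-converge to $X$ as the kernels of $\pi_\alpha$ shrink (their orbits on $X$ have diameter tending to zero). It therefore suffices to handle a compact Lie group $G$ with $G^\circ = T$ a torus and $F := G/T$ finite. For such $G$ I would construct dense finite subgroups: let $T_n \le T$ be the $n$-torsion subgroup (finite, $F$-invariant, and $(1/n)$-dense in $T$), lift a fixed generating set of $F$ to elements of finite order in $G$ (possible by divisibility of $T$), and take $\Gamma_n \le G$ to be the finite subgroup generated by $T_n$ together with these lifts. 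Then $\Gamma_n$ is $O(1/n)$-dense in $G$, so the orbit $\Gamma_n \cdot x_0 \subset X$ is a finite homogeneous metric space under $\Gamma_n$ that GH-converges to $X$.

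For necessity, suppose $X_n \to X$ in GH with each $X_n$ finite homogeneous. I would equip $G_n := \Isom(X_n)$ with the bi-invariant metric
\[
d_{G_n}(g,h) \df \sup_{x \in X_n} d_{X_n}(gx, hx),
\]
and extract a subsequential GH limit $(G_n, d_{G_n}) \to (G, d_G)$; uniform precompactness holds because an isometry of $X_n$ is determined up to $\varepsilon$ by its values on an $\varepsilon/2$-net and the covering numbers of the $X_n$ are uniformly bounded. Since multiplication and inversion are $1$-Lipschitz in the bi-invariant metric, they pass to the limit, so $G$ is a compact Hausdorff topological group acting isometrically and transitively on $X$ (transitivity being a limit of transitive actions).

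The crux is showing that $G^\circ$ is abelian. Since every compact group is an inverse limit of compact Lie groups, it suffices to check that every compact Lie quotient $L$ of $G$ has abelian identity component $L^\circ$. The approximation produces, for each $n$, a finite subgroup $F_n \le L$ (image of $G_n$ under $G_n \to G \twoheadrightarrow L$), and continuity of the quotient map together with the GH-density of $G_n$ in $G$ forces $F_n$ to be $\varepsilon_n$-dense in $L$ with $\varepsilon_n \to 0$. By Jordan's theorem, $F_n$ contains an abelian normal subgroup of index at most $J(L)$, which is conjugate into a fixed maximal torus $T \subset L^\circ$; hence $F_n$ is covered by at most $J(L)$ translates of conjugates of $T$, a union of sets of dimension $\rk(L^\circ)$. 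If $L^\circ$ were non-abelian then $\rk(L^\circ) < \dim L^\circ$, and a bounded number of such lower-dimensional translates has empty interior in $L^\circ$, contradicting $\varepsilon_n$-density for $\varepsilon_n \to 0$. Thus $L^\circ$ is abelian, and so is $G^\circ = \varprojlim L^\circ$. The main obstacle will be this last step: converting the approximate information that $G_n$ is GH-dense in $G$ into a genuine finite subgroup $F_n \le L$ of each Lie quotient that is still metrically dense. Once that passage is set up cleanly, the Jordan bound combined with the dimension count delivers the contradiction; the remaining ingredients --- the inverse-limit reductions on both sides and the dense-finite-subgroup construction via divisibility of the torus --- are more routine.
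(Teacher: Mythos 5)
The step you yourself flag as ``the main obstacle'' in the necessity direction is a genuine gap, and it is where the real content of the theorem lives. There is no homomorphism $G_n\to G$: the finite groups $G_n=\Isom(X_n)$ are not subgroups of the limit group $G$, and the Gromov--Hausdorff approximation only supplies maps $\psi_n:G_n\to G$ satisfying $d(\psi_n(g)\psi_n(h),\psi_n(gh))\le 11\gep_n$ --- and even this requires an argument, via the convergence of the graphs $\{(g,x,g\cdot x)\}$ of the actions (Lemma \ref{lem:8epsilon}). Consequently ``the image of $G_n$ in a Lie quotient $L$'' is not a subgroup, and your Jordan-theorem argument has nothing to apply to. Upgrading an $\gep$-quasi morphism from a finite group into a nearby genuine homomorphism into a compact Lie group is a nontrivial stability theorem in its own right; the paper sidesteps it entirely by defining quasi finiteness in terms of quasi morphisms, reducing to connected Lie quotients via Peter--Weyl together with Lemmas \ref{lem:open} and \ref{lem:H^0}, and then invoking Turing's theorem, which is stated at the level of quasi morphisms (Proposition \ref{prop:turing}). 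Your Jordan/dimension-count argument is essentially a sketch of Turing's theorem, but note a further inaccuracy there: a finite abelian subgroup of a compact Lie group need not be conjugate into a maximal torus (the diagonal Klein four-group in $\SO(3)$ is not), so the covering step must be run with the closures $\overline{A_n}$ --- compact abelian subgroups of positive codimension --- e.g.\ via a Chabauty-limit or volume estimate, not with translates of a fixed torus.

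In the sufficiency direction your inverse-limit reduction and the use of the $n$-torsion $T_n$ match the paper, but the group $\Gamma_n=\langle T_n,\gc_1,\dots,\gc_r\rangle$ generated by $T_n$ together with independently chosen finite-order lifts $\gc_i$ of generators of $F$ need not be finite. Take $G=O(2)\times\BZ/2\BZ$, so $T=\SO(2)$ and $F\cong\BZ/2\BZ\times\BZ/2\BZ$, and lift the two generators of $F$ to $(\rho_1,\bar 1)$ and $(\rho_2,\bar 0)$ with $\rho_1,\rho_2$ reflections whose axes meet at an irrational angle: each lift has order $2$, but their product involves the irrational rotation $\rho_1\rho_2$ and has infinite order, so $\Gamma_n$ is infinite (indeed dense in $\SO(2)\times\{\bar 0\}$) and its orbit is not a finite metric space. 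What is needed is a single finite subgroup $\Lambda\le G$ with $\Lambda T=G$, so that the lifts generate a finite group \emph{jointly}; this is exactly the paper's Lemma \ref{lem:platonov} (Platonov's lemma), and for torus-by-finite $G$ it can alternatively be extracted from the divisibility of $T$ via the surjectivity of $H^2(F,T_{|F|})\to H^2(F,T)$. Once $\Lambda$ is in hand, $\langle\Lambda,T_n\rangle\cap T$ is a finitely generated torsion subgroup of the abelian group $T$, hence finite, which is precisely how Lemma \ref{lem:dense-direct} proceeds. So your overall architecture coincides with the paper's, but both halves rest on a step that is false or unproved as stated.
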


\begin{cor}\label{cor}
If $X$ is approximable by finite homogeneous metric spaces then
$X$ is compact, the connected components of $X$ are inverse limits of tori, and the quotient space of connected components $X/\sim$ is a transitive totally disconnected space hence is either finite or homeomorphic to the Cantor set.
\end{cor}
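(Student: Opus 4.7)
The plan is to apply Theorem~\ref{thm} and analyze the resulting transitive action of the compact group $G$ on $X$, with identity component $G^\circ$ abelian. Fix a base point $x_0 \in X$ and let $H \leq G$ denote its stabilizer, so that $X \cong G/H$ as a $G$-space. Compactness of $X$ is then immediate, since it is the continuous image of the compact group $G$ under the orbit map $g \mapsto g x_0$.

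For the description of connected components, I would consider the quotient map $q \colon G/H \to G/(G^\circ H)$. Since $G^\circ$ is a closed normal subgroup of $G$, the subgroup $G^\circ H$ is closed and contains $G^\circ$, and the target identifies with $(G/G^\circ)/(G^\circ H/G^\circ)$, a quotient of the compact totally disconnected group $G/G^\circ$ by a closed subgroup; it is therefore itself compact and totally disconnected. The fiber of $q$ over the basepoint is $G^\circ H/H \cong G^\circ/(G^\circ \cap H)$, a quotient of the compact connected abelian group $G^\circ$ by the closed (automatically normal) subgroup $G^\circ \cap H$. This fiber is thus connected, and since $G$ acts transitively on the fibers of $q$, every fiber is connected. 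For any continuous surjection from a compact Hausdorff space onto a totally disconnected space with connected fibers, the fibers must coincide with the connected components of the source, so the connected components of $X$ are precisely the $G^\circ$-orbits. Each such orbit is a compact connected abelian group, hence by the structure theorem for compact connected abelian groups is an inverse limit of tori.

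Finally, $X/\sim$ is identified with $G/(G^\circ H)$, inherits a transitive action of $G$, and is compact and metrizable (it is the continuous image of the compact metrizable space $X$, and one may take $G \leq \Isom(X)$, which is compact and second countable). By the preceding step it is totally disconnected. If it is infinite, transitivity forces every point to be non-isolated, so it is a compact metrizable perfect totally disconnected space, which by Brouwer's classical theorem is homeomorphic to the Cantor set; otherwise it is finite.

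The main technical point is the identification of the connected components of $X$ with the $G^\circ$-orbits via the map $q$. This rests on two standard inputs: $G/G^\circ$ is totally disconnected, and any quotient of a totally disconnected compact group by a closed subgroup remains totally disconnected. Granting these, the identification of fibers with components follows from the general fact that a continuous map from a compact Hausdorff space to a totally disconnected space with connected fibers is the quotient by connectivity, and the structural statements about components and $X/\sim$ follow at once.
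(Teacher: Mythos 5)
Your proof is correct, and for the central point it is the paper's argument in a different packaging. The paper isolates Lemma \ref{lem:connected-transitive} ($G^\circ$ acts transitively on each connected component), proved by letting the profinite group $H_1/G^\circ$ (where $H_1$ is the stabilizer of a component) act transitively on the $G^\circ$-orbit space of that component and observing that a connected homogeneous space of a profinite group is a singleton; you instead exhibit the component decomposition globally as the map $q\colon G/H\to G/(G^\circ H)$, with connected fibers over a totally disconnected base, and invoke the elementary fact that such fibers must be the components. Both arguments rest on the same pillar --- a quotient of a profinite group by a closed subgroup is totally disconnected --- which the paper records explicitly right after its lemma, so the mathematical content is the same. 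Where your write-up genuinely adds value is in coverage: the paper's displayed proof of the corollary only treats the ``inverse limit of tori'' clause, leaving the compactness of $X$ and the finite-or-Cantor dichotomy for $X/\!\sim$ unargued, whereas your identification $X/\!\sim\;\cong G/(G^\circ H)$ delivers the transitive totally disconnected structure directly, and the homogeneity-plus-Brouwer step for the Cantor alternative is correctly executed. All the supporting facts you cite (closedness of $G^\circ H$, the orbit map $G/H\to X$ being a homeomorphism, normality of $G^\circ\cap H$ in the abelian $G^\circ$) check out.
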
  

We will see that a connected component of $X$ is an inverse limit of tori in the strong, group theoretic, manner: it is homeomorphic to $\varprojlim T_n$ where the $T_n$ are compact abelian Lie groups, and the associated maps are surjective homomorphisms. 

In particular,

\begin{cor}
The only manifolds that can be approximated by finite homogenous metric spaces are tori.
\end{cor}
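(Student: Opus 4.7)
The plan is to combine Corollary \ref{cor} with the Montgomery--Zippin solution to Hilbert's fifth problem. Let $X$ be a manifold approximable by finite homogeneous metric spaces. First I observe that, by Corollary \ref{cor}, $X$ is compact; being a manifold, it is locally connected, so its connected components are open (hence clopen). Compactness then forces the set of components to be finite, and each component $C$ is itself a compact connected manifold.

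Next, I invoke the group-theoretic refinement of Corollary \ref{cor} stated in the paragraph following it: every connected component $C$ is homeomorphic to an inverse limit $\varprojlim T_n$ of compact abelian Lie groups along surjective homomorphisms. Transporting structure through this homeomorphism, $C$ acquires the structure of a compact, connected, abelian topological group.

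The crucial step is then to notice that $C$ is simultaneously a topological manifold and a topological group. By the Montgomery--Zippin theorem---the solution to Hilbert's fifth problem---every locally Euclidean topological group is a Lie group. Hence $C$ is a compact connected abelian Lie group, and such a group is necessarily a torus. Since Theorem \ref{thm} supplies a compact group $G$ acting transitively on $X$, and hence transitively on the finite set of components, all components of $X$ are mutually homeomorphic, so $X$ is a finite disjoint union of copies of a single torus $\mathbb{T}^d$.

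I expect the only genuinely non-trivial step to be the appeal to Hilbert's fifth problem. If a more hands-on route is preferred, one can instead analyse the inverse system directly: the dimensions $d_n = \dim T_n$ are non-decreasing, so if $d_n \to \infty$ the limit has infinite topological dimension, while if $d_n$ stabilises at some $d$ but the kernels of the bonding maps fail to become eventually trivial, the limit acquires a totally disconnected transverse direction and is locally homeomorphic to $\mathbb{R}^d$ times a Cantor set. Neither possibility yields a manifold, so the system must stabilise and $C$ must be a torus.
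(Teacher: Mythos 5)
Your argument is correct, and it supplies exactly the step the paper leaves implicit: the paper derives this corollary from the sentence following Corollary \ref{cor} (each component is homeomorphic to $\varprojlim T_n$ with the $T_n$ compact abelian Lie groups and surjective bonding homomorphisms) with nothing more than an ``in particular''. Your main route closes that gap by transporting the group structure to the component and invoking the locally Euclidean case of Hilbert's fifth problem; this is legitimate (and for compact groups is classical, going back to von Neumann, so it is less heavy machinery than it may appear). Your ``hands-on'' alternative --- either $\dim T_n\to\infty$, forcing infinite topological dimension, or the dimensions stabilize and infinitely many nontrivial finite kernels produce a solenoid, locally $\BR^d\times(\text{Cantor set})$ and hence not locally connected --- is essentially the argument the paper itself runs in Remark \ref{rem}.4 for length spaces, transplanted to the manifold setting, and is closer in spirit to what the author intended. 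Your preliminary observations (local connectedness makes the components clopen, compactness makes them finite in number, and transitivity of the group from Theorem \ref{thm} makes them mutually homeomorphic) are a worthwhile point of care that the paper glosses over, since strictly speaking a disconnected manifold approximable by finite homogeneous spaces is a finite disjoint union of copies of one torus rather than a torus.
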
 

The following example illustrates another type of limits of finite homogeneous metric spaces. 

\begin{exam}[The solenoid]
For every $n$, let $T_n$ be a copy of the circle group $\{z\in\BC:|z|=1\}$, and whenever $m$ divides $n$ let $f_{n,m}:T_n\to T_m$ be the $n/m$ sheeted cover $f_{n,m}(z)=z^{n/m}$. Let $G=\varprojlim T_n$ be the inverse limit group, and equip $G$ with any compatible invariant metric. Then $G$ is approximable by finite homogeneous metric spaces, but it is not arcwise connected, hence cannot be approximated by finite transitive graphs (see Remark \ref{rem}.4).  
\end{exam}

\begin{rems}\label{rem}
1. A classical Theorem of Jordan \cite{J} states that for every $n\in\BN$ there is $m\in\BN$ such that any finite subgroup of $\GL_n(\BC)$ admits an abelian subgroup of index $\le m$. Based on this theorem, Turing \cite{Turing} proved  that a connected compact Lie group $G$ can be approximated by finite groups, or in our terminology below is quasi finite, if and only if it is abelian (see also \cite{Kazhdan}\footnote{Kazhdan, who was not aware of \cite{Turing}, reproved Turing's theorem, and in fact a stronger result, dealing with amenable instead of finite groups, and a unitary image which is not necessarily finite dimensional.} and \cite{AGG}). Our result can be considered as a metric version of Jordan's and Turing's theorems.

\medskip
\noindent

2. It follows that for $n\ge 2$, the $n$ dimensional sphere $S^n$ cannot be approximated by finite homogeneous metric spaces. It makes sense to ask how far is the (vertex space of the) soccer ball, with the induced metric from $S^2$, or any other semi-Platonic polygon, from the optimal approximation. It may also be interesting to investigate the asymptotic of the distance of $S^n$ from the set of finite homogeneous spaces.

\medskip
\noindent
3. Note that every homogeneous metric on a finite, as well as infinite, dimensional torus can be approximated by finite homogeneous metric spaces. However, only very specific metrics can be approximated by scaled finite transitive graphs. It might be interesting to classify these metrics.

\medskip
\noindent
4. If an inverse limit of tori $T=\varprojlim T_\ga$ admits a compatible metric that makes it a length space then for every $\ga$ there is $k(\ga)\in\BN$ such that for any $\gb>\ga$ the pre-image in $T_\gb$ of a point in $T_\ga$ has at most $k(\ga)$ connected components. This implies that $T$ splits as a product $\overline{T}_\ga\times T'$ where $\overline{T}_\ga$ is some finite cover of $T_\ga$. It follows that $T$ is actually a torus, i.e. homeomorphic to a finite or infinite product of circles. 

If $X_n$ are scaled transitive finite graphs with number of vertices tends to infinity, and $(X_n)$ has bounded geometry (defined below), then, by \ref{lem:comp}, $\{X_n\}_{n=1}^\infty$ is relatively compact, and it is easy to see that every limit must be a length space. 
It follows from Theorem \ref{thm} that any limit of $(X_n)$ is a (finite or infinite dimensional) torus.

Imposing additional conditions on $X_n$, it is sometimes possible to deduce more information about the limits. For instance, if the $X_n$ satisfy the growth condition $|X_n|\le \text{Diam}(X_n)^q$ (as in \cite{Itai}\footnote{By \cite{Itai} this growth condition implies bounded geometry.}), where $\text{Diam}(X_n)$ denotes the diameter before scaling, it is possible to show that any limit must be a torus of dimension at most $q$. 

For another example, assume that the girth of $X_n$ is bounded below. Then the girth (i.e. the infimal length of a simple loop) of any limit of $(X_n)$ is also bounded below. This forces the limit to be a circle. Hence it follows that $(X_n)$ converges to $S^1$ and in turn that the entropy of $X_n$ is asymptotically the entropy of the circle, i.e. $\forall\gep>0,~E(\gep,X_n)\to 1/\gep$ (see definition below), and the girth of $X_n$ tends to $2$. 
Thus the geometry of the $(X_n)$ is either unbounded or asymptotically the geometry of the circle.
\end{rems}

Here is another situation where the limit is unique and has to be $S^1$.
Recall that a graph is said to be {\it distance transitive} if the automorphism group acts transitively on pairs of vertices of any given distance.

\begin{cor}\label{cor:distance-transitive}
Let $(X_n)$ be a bounded geometry sequence of distance transitive scaled finite graphs and suppose that the number of vertices $|X_n|\to \infty$. Then $(X_n)$ converges to $S^1$ with the length distance. In particular $\forall\gep>0,~E(\gep,X_n)\to 1/\gep$.
\end{cor}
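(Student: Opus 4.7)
The plan is to show that every Gromov--Hausdorff subsequential limit of $(X_n)$ equals $S^1$; this yields convergence of the full sequence by the usual compactness argument, and the entropy asymptotic follows from Gromov--Hausdorff continuity of $E(\gep,\cdot)$ together with $E(\gep,S^1)=1/\gep$. By \ref{lem:comp}, pass to a subsequence $X_{n_k}\to X$. Since each $X_{n_k}$ (with its scaled intrinsic path metric) is a connected length space of bounded diameter and $|X_{n_k}|\to\infty$, the limit $X$ is a compact, connected length space. Theorem~\ref{thm} and Remark~\ref{rem}.4 then identify $X$ with a (finite- or infinite-dimensional) torus $T$ carrying a translation-invariant metric.

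The heart of the argument is to transfer distance transitivity to the limit: for any $x,y,x',y'\in X$ with $d(x,y)=d(x',y')=r$ one needs an isometry $\gph$ of $X$ with $\gph(x)=x'$ and $\gph(y)=y'$. Pick approximating vertices $u_k\to x,\ v_k\to y,\ u'_k\to x',\ v'_k\to y'$ in $X_{n_k}$, and let $\gep_k$ denote its edge-scale. The graph distances $d(u_k,v_k)$ and $d(u'_k,v'_k)$ lie in $\gep_k\BZ$ and both converge to $r$; using vertex transitivity of $X_{n_k}$ together with the fact that the $r$-sphere about $x$ in $X$ is Gromov--Hausdorff approximated by appropriate discrete spheres in $X_{n_k}$, one can slightly adjust $v_k$ (or $v'_k$) to a nearby vertex so that $d(u_k,v_k)=d(u'_k,v'_k)$ exactly. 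Distance transitivity then furnishes $\gph_k\in\aut(X_{n_k})$ sending $(u_k,v_k)$ to $(u'_k,v'_k)$. Bounded geometry makes $\{\gph_k\}$ uniformly equicontinuous, so an Arzela--Ascoli argument produces along a further subsequence a limit isometry $\gph$ of $X$ with the required properties.

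It remains to classify distance-transitive tori. In a torus $T$ with a translation-invariant metric, the identity component of $\Isom(T)$ coincides with $T$ acting on itself by translations -- any larger continuous isometry group would require a positive-dimensional group of norm-preserving lattice automorphisms, which cannot exist -- so the stabilizer $\Isom(T)_x$ is at most countable: for $T=T^d$ with $d$ finite it is a finite subgroup of $\aut(L)\cap O(d)$, and in infinite dimensions it is generated by coordinate inversions and permutations of equal factors. But for small $r>0$ the sphere $S_r(x)\subset T$ is positive-dimensional and uncountable -- topologically $S^{d-1}$ when $\dim T=d$, or an infinite-dimensional sphere otherwise -- and no countable group can act transitively on such a set. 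Hence $\dim T=1$, i.e.\ $X=S^1$.

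The main technical obstacle is in the second paragraph: making the discrete distances $d(u_k,v_k)$ and $d(u'_k,v'_k)$ match exactly by an allowable small perturbation (which needs vertex transitivity plus control of discrete spheres via bounded geometry), and then getting a limit isometry by equicontinuity. Once this is in place, the identification of $X$ with a torus is inherited from Theorem~\ref{thm}, and the classification of distance-transitive tori is short.
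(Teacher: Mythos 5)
Your overall strategy (every subsequential limit is a torus by Theorem \ref{thm}; distance transitivity survives the limit; a torus of dimension $>1$ cannot be distance transitive) coincides with the paper's, and the first two steps are essentially workable: the distance-matching in your second paragraph can be done by always shortening the \emph{longer} of the two pairs, replacing $v'_k$ by the vertex at graph distance $d(u_k,v_k)$ from $u'_k$ on a $u'_k$--$v'_k$ geodesic, which moves $v'_k$ by $|d(u_k,v_k)-d(u'_k,v'_k)|\to 0$; and the Arzel\`a--Ascoli step is exactly the mechanism of Proposition \ref{prop:G}. (The paper sidesteps the matching issue entirely: it only uses that the stabilizer $(G_n)_{x_n}$ acts transitively on a single sphere $S_{X_n}(r_n,x_n)$ with $r_n$ a realized graph distance, which is all that is needed for the contradiction.)

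The genuine gap is in your final step, the classification of distance-transitive tori. Your contradiction rests on the claim that the point stabilizer in $\Isom(T)$ is at most countable, and this is false precisely for the infinite-dimensional tori that still need to be excluded at this stage. For example, let $T=\prod_{n\ge1}T_n$ where $T_n$ is a circle of circumference $c_n$ with $\sum_n c_n<\infty$, metrized by $d(x,y)=\sum_n d_n(x_n,y_n)$: the group $\prod_n\{\pm1\}$ of coordinatewise inversions is an \emph{uncountable} (profinite) group of isometries fixing the identity. Moreover your preliminary claim $\Isom(T)^\circ=T$ is only justified in finite dimensions (lift to the universal cover, apply Mazur--Ulam); an infinite product of circles is not semi-locally simply connected, has no universal cover, and that argument collapses. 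The paper avoids computing $\Isom(T)$ altogether: Theorem \ref{thm} gives a transitive limit group with abelian identity component $G^\circ$, which acts transitively on $X$ by Lemma \ref{lem:connected-transitive}; for an abelian transitive group the stabilizer $(G^\circ)_x$ acts trivially on all of $X$ (if $a\cdot x=x$ and $y=b\cdot x$ then $a\cdot y=ab\cdot x=ba\cdot x=y$), so $(G_x)^\circ\le (G^\circ)_x$ cannot act transitively on a non-singleton connected component of a sphere --- yet it must, being a limit of the sphere-transitive stabilizers $(G_n)_{x_n}$. Equivalently, $(G_x)^\circ=\{e\}$ makes $G_x$ profinite, and a profinite group cannot act transitively on a connected non-singleton set. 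Your cardinality argument should be replaced by one of these topological arguments; as written it does not close the infinite-dimensional case.
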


{\bf Acknowledgment:} I would like to thank Itai Benjamini for stimulating discussions, Udi Hrushovski and Benji Weiss for telling me about Kazhdan's and Turing's papers
and Yair Glasner for suggesting the problems answered in Remark \ref{rem}.4 and Corollary \ref{cor:distance-transitive}. The research was supported by the ISF and the ERC.


\section{Some compactness arguments}\label{sec:cmp}
Let $E:\BR^{+}\to\BN$ be a non-increasing function. We shall say that a metric space $Y$ has $E$ bounded geometry if for every $\gep>0$ the $\gep$-entropy of $Y$ is at most $E(\gep)$, i.e. any subset of $Y$ of size $E(\gep)+1$ admits two points of distance at most $\gep$. We shall denote the $\gep$-entropy of a compact space $Y$ by $E(\gep,Y)$. We will say that a family $\mathcal{A}$ of compact metric spaces has uniformly bounded geometry (or shortly, bounded geometry) if there is a function $E(\gep)$ such that $E(\gep,Y)\le E(\gep)$ for every $Y\in\mathcal{A}$. 
First recall:

\begin{lem}[Gromov's compactness criterion, \cite{BH} Theorem 5.41]\label{lem:comp}
A set $\mathcal{A}$ of compact metric spaces is relatively compact with respect to the Gromov--Hausdorff distance iff the elements in $\mathcal{A}$ have uniformly bounded diameter and uniformly bounded geometry.
\end{lem}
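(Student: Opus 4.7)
The plan is to prove the two directions of the compactness criterion separately, as is standard. Both directions follow from the fact that an $\epsilon$-net controls a compact metric space up to Gromov--Hausdorff error $\epsilon$, together with the observation that the collection of finite metric spaces of given cardinality and bounded diameter is itself GH-compact.

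For the necessity direction, assume $\mathcal{A}$ is relatively compact and let $\overline{\mathcal{A}}$ denote its GH-closure, which is compact. Since the diameter function is $2$-Lipschitz with respect to GH-distance, it is bounded on $\overline{\mathcal{A}}$, giving the uniform diameter bound. For the entropy bound, fix $\gep>0$ and cover $\overline{\mathcal{A}}$ by finitely many GH-balls of radius $\gep/3$ centered at compact spaces $Y_1,\dots,Y_k$. Each $Y_i$ is compact, hence admits a finite $\gep/3$-net of some size $N_i$. For any $Y\in\mathcal{A}$, choose $i$ with $d_{GH}(Y,Y_i)<\gep/3$, pull back an $\gep/3$-net of $Y_i$ via an almost-isometric correspondence, and enlarge by $\gep/3$ to obtain an $\gep$-net of $Y$ of size at most $N_i$. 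Hence $E(\gep,Y)\le\max_i N_i$, giving a function $E(\gep)$ as required.

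For the sufficiency direction, the main point is a Cantor-style diagonal extraction. Given a sequence $Y_n\in\mathcal{A}$ with uniform diameter bound $D$ and uniform entropy function $E$, fix a sequence $\gep_k\downarrow 0$. For each $k$ choose an $\gep_k$-net $S_n^{(k)}\subset Y_n$ of cardinality at most $E(\gep_k)$. After passing to a subsequence, we may assume $|S_n^{(k)}|=N_k$ is independent of $n$, and that the nets are nested in the sense that $S_n^{(k)}\subset S_n^{(k+1)}$. Label each $S_n^{(k)}$ by $\{1,\dots,N_k\}$ consistently with the nesting; the metric on $S_n^{(k)}$ is then encoded by a symmetric matrix in $[0,D]^{N_k\times N_k}$ satisfying the triangle inequality. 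By sequential compactness of this matrix set, a further subsequence yields convergence of matrices for each $k$. A standard diagonal argument produces a single subsequence $(Y_{n_j})$ such that for every $k$, the labeled nets $S_{n_j}^{(k)}$ converge (pointwise in their pairwise distances) to a finite metric space $S_\infty^{(k)}$ of cardinality $N_k$, with compatible nesting $S_\infty^{(k)}\subset S_\infty^{(k+1)}$.

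It remains to assemble these finite limits into a single compact metric space and verify GH-convergence. I would let $Y_\infty$ be the metric completion of $\bigcup_k S_\infty^{(k)}$; the nesting and uniform diameter bound make this a well-defined compact metric space with an $\gep_k$-net given by $S_\infty^{(k)}$. The GH-convergence $Y_{n_j}\to Y_\infty$ then follows from the triangle inequality in GH-distance: for each $k$ the net $S_{n_j}^{(k)}$ is $\gep_k$-dense in $Y_{n_j}$ and $S_\infty^{(k)}$ is $\gep_k$-dense in $Y_\infty$, while the finite nets converge in GH (being pointwise convergent with matching labels). The technical crux, and the step I would be most careful about, is the bookkeeping that guarantees the nested-net property on a common subsequence; once the nesting is in place, both the construction of $Y_\infty$ and the final convergence estimate are essentially formal.
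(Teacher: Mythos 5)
The paper does not prove this lemma at all: it is quoted verbatim from Bridson--Haefliger (Theorem 5.41 there), so there is no in-paper argument to compare against. Your proposal is the standard proof of Gromov's precompactness criterion (as in Bridson--Haefliger or Burago--Burago--Ivanov) and is essentially correct: the necessity direction via a finite cover of the closure by small GH-balls and transport of nets along almost-isometric correspondences works, and the sufficiency direction via nested $\gep_k$-nets, distance matrices in $[0,D]^{N_k\times N_k}$, and a diagonal extraction is the right construction. Two routine points you should make explicit: first, the nesting $S_n^{(k)}\subset S_n^{(k+1)}$ is arranged by construction (take maximal $\gep_k$-separated sets and extend each to a maximal $\gep_{k+1}$-separated superset), not by passing to a subsequence, and the cardinality bound then comes from the fact that an $\gep$-separated set has at most $E(\gep)$ points under the paper's definition of entropy (beware the usual factor of $2$ between covering and packing numbers, which is harmless here since $\gep$ is arbitrary); second, the limit of the distance matrices is a priori only a pseudometric, since distinct labels may converge to distance $0$, so $Y_\infty$ should be the completion of the metric quotient of $\bigcup_k S_\infty^{(k)}$. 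With those adjustments the final estimate $\limsup_j d(Y_{n_j},Y_\infty)\le 2\gep_k$ for every $k$ closes the argument.
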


We will usually denote all distances by $d$, omitting references to the spaces in which they are measured. By the distance between subsets of a given metric space we always mean the Hausdorff distance and the distance between two compact metric spaces is always the Gromov--Hausdorff distance.

The isometry group of a compact metric space $Y$ is compact and equipped with the bi-invariant metric $d(g_1,g_2)=\max_{y\in Y}d_Y(g_1\cdot y,g_2\cdot y)$. Note that $\text{diam}(\text{Isom}(Y))\le\diam(Y)$.

\begin{lem}
If $Y$ has $E$-bounded geometry then $\text{Isom}(Y)$ has $E'$-bounded geometry, where $E'(\gep)=E({\gep\over 4})^{E({\gep\over 4})}$.
\end{lem}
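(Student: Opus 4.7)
The plan is to reduce a distance question about isometries to a combinatorial counting of functions on a small net. Let $\eta = \varepsilon/4$. Because $Y$ has $E$-bounded geometry, I can choose a maximal $\eta$-separated set $N = \{y_1,\dots,y_k\} \subset Y$; maximality forces $N$ to be an $\eta$-net (every point of $Y$ lies within $\eta$ of some $y_i$), and the bounded geometry hypothesis gives $k \le E(\eta)$.

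Next, to each isometry $g \in \text{Isom}(Y)$ I associate a ``discretized trace'' $\phi_g : N \to N$ defined by letting $\phi_g(y_i)$ be a nearest point of $N$ to $g(y_i)$ (with any fixed tie-breaking rule). By construction $d_Y(g(y_i),\phi_g(y_i)) \le \eta$ for every $i$. The total number of functions $N \to N$ is at most $k^k \le E(\eta)^{E(\eta)} = E'(\varepsilon)$.

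Now suppose $\mathcal{F} \subset \text{Isom}(Y)$ has cardinality $E'(\varepsilon)+1$. By pigeonhole, two distinct elements $g_1, g_2 \in \mathcal{F}$ satisfy $\phi_{g_1} = \phi_{g_2}$, so for every net point $y_i$,
\[
d_Y(g_1(y_i), g_2(y_i)) \le d_Y(g_1(y_i), \phi_{g_1}(y_i)) + d_Y(\phi_{g_2}(y_i), g_2(y_i)) \le 2\eta.
\]
For an arbitrary $y \in Y$, pick $y_i \in N$ with $d_Y(y,y_i) \le \eta$. Using that $g_1, g_2$ are isometries,
\[
d_Y(g_1(y), g_2(y)) \le d_Y(g_1(y), g_1(y_i)) + d_Y(g_1(y_i), g_2(y_i)) + d_Y(g_2(y_i), g_2(y)) \le \eta + 2\eta + \eta = \varepsilon.
\]
Taking the maximum over $y \in Y$ gives $d(g_1,g_2) \le \varepsilon$, which is exactly what bounded geometry with bound $E'$ requires.

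The only delicate point is getting the constant $4$ right: two contributions of $\eta$ come from replacing an isometry by its discretization on the net, and two more come from moving an arbitrary point to a net point before and after applying the isometries. Balancing these four $\eta$'s against $\varepsilon$ is why the choice $\eta = \varepsilon/4$ is forced; everything else is bookkeeping. I do not foresee any genuine obstacle.
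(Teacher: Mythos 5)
Your proof is correct and follows essentially the same route as the paper: fix an $\varepsilon/4$-net of size at most $E(\varepsilon/4)$, discretize each isometry to a map from the net to itself, and apply the pigeonhole principle to the at most $E(\varepsilon/4)^{E(\varepsilon/4)}$ such maps. You merely make explicit the counting that the paper's one-line pigeonhole step leaves implicit, and the bookkeeping of the four $\varepsilon/4$ contributions matches the paper's.
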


\begin{proof}
Let $\mathcal{F}$ be an $\gep\over 4$-net in $Y$ of size $|\mathcal{F}|\le E({\gep\over 4})$. By the Pigeonhole Principle, in any set $\{g_i\}_{i\in I}$ of $|I|=|\mathcal{F}|^{|\mathcal{F}|}+1$ isometries of $Y$ there are $i,j\in I$ such that $i\ne j$ and $d(g_i\cdot f,g_j\cdot f)\le{\gep\over 2}$ for every $f\in\mathcal{F}$.
Since for every $y\in Y$ there is $f\in\mathcal{F}$ with $d(y,f)<{\gep\over 4}$ we deduce that $d(g_i,g_j)=\max_{y\in Y}d(g_i\cdot y,g_j\cdot y)\le \gep$.
\end{proof}

It follows that if $X_n$ is a sequence of compact metric spaces which converges to a limit $X$, then the associated isometry groups $\text{Isom}(X_n)$ have uniformly bounded geometry. Hence for any choice of closed subgroups $G_n\le \text{Isom}(X_n)$ there is a, not necessarily unique, limit. 
We claim that every such limit $G$ is isometric to a subgroup of $\text{Isom}(X)$. To see this let us take a subsequence such that $G_{n_k}\to G$ and for every $n_k$ let us fix 
$$
 \phi_{n_k}:X_{n_k}\to X,~\ti\phi_{n_k}:X\to X_{n_k},~\psi_{n_k}:G_{n_k}\to G~\text{and}~\ti\psi_{n_k}:G\to G_{n_k}
$$ 
such that the pair $(\phi_{n_k},\ti\phi_{n_k})$ (resp. $(\psi_{n_k},\ti\psi_{n_k})$) form an $\gep_{n_k}$-isometric equivalence\footnote{I.e. they are $(1,\gep)$-quasi isometries and their compositions are $\gep$-close to the identities.} between $X_{n_k}$ and $X$ (resp. between $G_{n_k}$ and $G$), and $\gep_{n_k}\to 0$. 
Since bounded geometry behaves nicely with direct products and restrictions to subsets, by taking a further sub-sequence $n_{k_l}$ we get that the spaces of triples 
$$
 Z_{n_{k_l}}=\{ (g,x,g\cdot x):g\in G_{n_{k_l}}, x\in X_{n_{k_l}}\}\subset G_{n_{k_l}}\times X_{n_{k_l}}\times X_{n_{k_l}}
$$
converges to a subset $Z\subset G\times X\times X$, where for products we take the sup metric, and the restrictions of the maps
$$
 ((\psi_{n_{k_l}},\phi_{n_{k_l}},\phi_{n_{k_l}}),(\ti\psi_{n_{k_l}},\ti\phi_{n_{k_l}},\ti\phi_{n_{k_l}})),
 \footnote{Note that the restriction of this pair of maps to $Z_{n_{k_l}}\times Z$ does not necessarily range inside $Z\times Z_{n_{k_l}}$ but this should cause no ambiguity.}
$$
form $\gep_{n_{k_l}}'$-equivalence between $Z_{n_{k_l}}\subset G_{n_{k_l}}\times X_{n_{k_l}}\times X_{n_{k_l}}$ and $Z\subset G\times X\times X$ with $\gep_{n_{k_l}}'\to 0$. 

Since, for every ${n_{k_l}}$, the projection to the product of the first two factors 
$$
 Z_{n_{k_l}}\to G_{n_{k_l}}\times X_{n_{k_l}}
$$ 
is $1$-Lipschitz one-to-one and onto with a $2$-Lipschitz inverse,
we deduce that the projection $Z\to G\times X$ share these properties as well. Denoting by $p^{-1}:G\times X\to Z$ the inverse of this projection and by $\pi$ the projection to the third factor, allows us to define an action of every $g\in G$ on $X$ by: $g\cdot x:=\pi\circ p^{-1}(g,x)$. Since this is a "limit of actions by isometries of groups", i.e. 
$$
 \forall (g,x)\in G\times X,~ g\cdot x=\lim_{n_{k_l}}\phi_{n_{k_l}}(\ti\psi_{n_{k_l}}(g)\cdot\ti\phi_{n_{k_l}}(x)),
$$
it follows that $g\cdot$ is an isometry of $X$ for every $g\in G$ and that $\{g\cdot:g\in G\}$ is a group. Finally, from the way that the metrics on $G_n$ and on $\text{Isom}(X)$ are defined, it follows that the map $g\mapsto g\cdot$ is an isometry between $G$ and its image in $\text{Isom}(X)$.
In addition, if for all $n$, $G_n$ acts transitively on $X_n$ then for every $x,y\in X$ we can choose $g_{n_{k_l}}\in G_{n_{k_l}}$ such that 
$g_{n_{k_l}}\cdot \ti\phi_{n_{k_l}}(x)=\ti\phi_{n_{k_l}}(y)$, and obviously any limit point $g_0$ of $\psi_{n_{k_l}}(g_{n_{k_l}})$ satisfies $g_0\cdot x=y$. Let us summarize the above discussion:

\begin{prop}\label{prop:G}
Suppose that $X_n$ are compact metric spaces, $G_n\le\text{Isom}(X_n)$ are closed subgroups of the corresponding isometry groups and that $X_n\to X$. Then $\{ G_n\}$ is pre-compact and any limit point $G$ of $(G_n)$ is isometric to a subgroup of $\text{Isom}(X)$.\footnote{Note that the limit group $G$ could be strictly smaller than $\text{Isom}(X)$ even for the sequance $G_n=\text{Isom}(X_n)$.} Furthermore, if for all $n$, $G_n$ acts transitively on $X_n$ then every limit $G$ acts transitively on $X$. 
\end{prop}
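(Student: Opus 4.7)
The plan is to unpack the three assertions separately. \textbf{Pre-compactness of $\{G_n\}$:} Since $X_n \to X$, Lemma \ref{lem:comp} tells us that $\{X_n\}$ has uniformly bounded diameter and uniformly bounded geometry. Applying the preceding lemma gives a uniform geometry bound $E'$ for $\text{Isom}(X_n)$, and trivially $\diam(\text{Isom}(X_n)) \le \diam(X_n)$ is uniformly bounded. A closed subset inherits both bounds, so $\{G_n\}$ satisfies the hypotheses of Lemma \ref{lem:comp} and is pre-compact.

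\textbf{Embedding limits into $\text{Isom}(X)$:} Extract a subsequence (still indexed by $n$) along which $G_n \to G$. Choose $\gep_n$-equivalences $(\phi_n,\ti\phi_n)$ between $X_n$ and $X$ and $(\psi_n,\ti\psi_n)$ between $G_n$ and $G$, with $\gep_n \to 0$. Form the graphs of the actions,
$$
Z_n = \{(g,x,g\cdot x) : g \in G_n,\ x \in X_n\} \subset G_n \times X_n \times X_n,
$$
equipped with the sup metric. Since products and subspaces of bounded geometry spaces still have bounded geometry, Lemma \ref{lem:comp} lets me pass to a further subsequence with $Z_n \to Z \subset G \times X \times X$ and with the natural product maps built from $(\psi_n,\phi_n,\phi_n)$ and $(\ti\psi_n,\ti\phi_n,\ti\phi_n)$ forming $\gep_n'$-equivalences. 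At each finite stage the projection $Z_n \to G_n \times X_n$ is a $1$-Lipschitz bijection (since $g\cdot x$ is a function of $(g,x)$) whose inverse is $2$-Lipschitz (because $g$ is an isometry, so the third coordinate $g\cdot x$ changes by at most the sum of the changes in $g$ and in $x$). These two metric properties pass to the limit, yielding a bijection $p: Z \to G \times X$ with $p^{-1}$ $2$-Lipschitz.

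\textbf{From $p^{-1}$ to an action:} Define $g \cdot x := \pi_3 \circ p^{-1}(g,x)$, where $\pi_3$ is projection onto the third coordinate. Because this operation arises as the pointwise limit
$$
g \cdot x = \lim_n \phi_n\!\left(\ti\psi_n(g) \cdot \ti\phi_n(x)\right),
$$
and each $\ti\psi_n(g) \in G_n$ acts by isometries, the map $x \mapsto g\cdot x$ is an isometry of $X$; associativity and the group law likewise pass to the limit, so $\{g\,\cdot : g \in G\}$ is a subgroup of $\text{Isom}(X)$. The metric on $G_n$ is $d(g_1,g_2) = \sup_x d(g_1\cdot x, g_2\cdot x)$, and the same formula defines the metric on $\text{Isom}(X)$, so the map $g \mapsto g\,\cdot$ is automatically an isometry from $G$ onto its image. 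For the transitivity addendum, given $x,y \in X$, pick $g_n \in G_n$ with $g_n \cdot \ti\phi_n(x) = \ti\phi_n(y)$; by pre-compactness $\psi_n(g_n)$ has a limit point $g_0 \in G$, and evaluating the limit formula above at $g_0$ gives $g_0 \cdot x = y$.

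The delicate step, and the place I expect the most care, is verifying that $p: Z \to G \times X$ is genuinely a bijection in the limit rather than merely a surjection. Uniqueness of the third coordinate needs the $2$-Lipschitz control on the finite-level inverses, so that a putative pair of limit triples $(g,x,z), (g,x,z')$ in $Z$ are forced by nearby pre-images to satisfy $z = z'$. Equivalently, one must know that if $g_n \cdot x_n$ and $g_n \cdot x_n'$ both approximate prescribed limit points while $(g_n, x_n), (g_n, x_n')$ both approximate $(g,x)$, then the third coordinates coincide in the limit — which is precisely what the uniform $2$-Lipschitz bound supplies.
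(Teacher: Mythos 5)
Your proposal is correct and follows essentially the same route as the paper: bounded geometry of the isometry groups plus Gromov's criterion for pre-compactness, the space of triples $Z_n=\{(g,x,g\cdot x)\}$ converging to $Z$, the $1$-Lipschitz projection with $2$-Lipschitz inverse passing to the limit, and the action defined via $\pi\circ p^{-1}$ with transitivity inherited from limit points of the $g_n$. Your closing remark about why $p$ remains injective in the limit is exactly the point the paper relies on (implicitly) when transferring the Lipschitz properties of the projection to $Z\to G\times X$.
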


In order to simplify let us, abusing the above notations, omit the subscript $k_l$ and assume that 
$$
 Z_n:=\{(g,x,g\cdot x):g\in G_n,x\in X_n\}
$$ 
converges, as above, to 
$$
 Z:=\{(g,x,g\cdot x):g\in G,x\in X\}
$$ 
and that 
$$
 (\theta_n,\ti\theta_n):=((\psi_{n},\phi_{n},\phi_{n}),(\ti\psi_{n},\ti\phi_{n},\ti\phi_{n}))
$$
form corresponding $\gep_n$-equivalences with $\gep_n\to 0$, i.e. 
$$
 \theta_n:G_n\times X_n\times X_n\to G\times X\times X~\text{and}~ \ti\theta_n:G\times X\times X\to G_n\times X_n\times X_n
$$
are $\epsilon_n$-isometries whose compositions are $\gep_n$-close to the corresponding identities, and 
$$
 d(\theta_n(Z_n),Z)\le\gep_n~\text{and}~d(\ti\theta_n (Z),Z_n)\le\gep_n.
$$ 

Recall that a map $f:A\to H$ from an abstract group $A$ to a metric group $H$ is called an $\gep$-quasi morphism if 
$$
 \forall a,b\in A,~d(f(a)f(b),f(ab))\le\gep.
$$

\begin{lem}\label{lem:8epsilon}
The map $\psi_n:G_n\to G$ is an $11\gep_n$-quasi morphism. 
\end{lem}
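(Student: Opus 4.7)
The plan is to derive the quasi-morphism bound from a single auxiliary estimate: an approximate $G_n$-equivariance of $\phi_n$, namely
$$
 d\bigl(\phi_n(a\cdot u),\,\psi_n(a)\cdot \phi_n(u)\bigr)\le 3\gep_n \quad (a\in G_n,~u\in X_n).
$$
This is the only place where the hypothesis on $Z_n$ and $Z$ enters; everything else is triangle inequality and the fact that the elements of $G$ act by isometries.

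To obtain the equivariance, I would argue as follows. Since $(a,u,a\cdot u)\in Z_n$, the triple $\theta_n(a,u,a\cdot u)=(\psi_n(a),\phi_n(u),\phi_n(a\cdot u))$ lies within $\gep_n$ of some $(g',x',g'\cdot x')\in Z$ in the sup metric. In particular $d(\psi_n(a),g')\le \gep_n$, $d(\phi_n(u),x')\le \gep_n$ and $d(\phi_n(a\cdot u),g'\cdot x')\le\gep_n$. Using that $g'$ acts on $X$ by isometries and that the metric on $G$ is the supremum of displacements on $X$, two triangle steps give $d(g'\cdot x',\psi_n(a)\cdot\phi_n(u))\le 2\gep_n$, and one further step against $\phi_n(a\cdot u)$ yields the $3\gep_n$ bound.

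With the equivariance in hand, I would chain three instances of it — applied to $(ab,u)$, to $(a,b\cdot u)$, and to $(b,u)$ — invoking that $\psi_n(a)$ is an isometry of $X$ to carry the third estimate through its action, to obtain
$$
 d\bigl(\psi_n(ab)\cdot\phi_n(u),\,\psi_n(a)\psi_n(b)\cdot\phi_n(u)\bigr)\le 9\gep_n
$$
for every $u\in X_n$. Finally, for an arbitrary $x\in X$, take $u:=\ti\phi_n(x)$, so that $d(\phi_n(u),x)\le\gep_n$; since both $\psi_n(ab)$ and $\psi_n(a)\psi_n(b)$ are isometries, replacing $\phi_n(u)$ by $x$ at each end costs $\gep_n$, raising the total to $11\gep_n$. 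Taking the supremum over $x\in X$ then gives $d(\psi_n(ab),\psi_n(a)\psi_n(b))\le 11\gep_n$.

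The only real subtlety is bookkeeping — keeping approximate equivariance at the optimal constant $3\gep_n$ so that the final constant lands at $9\gep_n+2\gep_n=11\gep_n$. This is where one must read the sup metric on $G$ correctly: it is the isometry property of $g'$ that converts the proximity "$d(\psi_n(a),g')\le\gep_n$" into a displacement estimate on $X$ without any additional loss.
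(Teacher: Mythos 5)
Your argument is correct and is essentially the paper's own proof: the approximate equivariance estimate $d(\phi_n(a\cdot u),\psi_n(a)\cdot\phi_n(u))\le 3\gep_n$ is exactly the paper's Claim (in the case $\gd=0$), derived the same way from $d(\theta_n(Z_n),Z)\le\gep_n$ and the isometric action, and the subsequent chaining of three instances plus the $2\gep_n$ cost of replacing $\phi_n(u)$ by an arbitrary $x\in X$ reproduces the paper's $4\gep_n+3\gep_n+4\gep_n$ bookkeeping in a slightly rearranged form, landing at the same $11\gep_n$.
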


\begin{proof}

Note that for $(h_1,y_1,z_1),(h_2,y_2,z_2)\in Z$ we have $d(z_1,z_2)\le d(h_1,h_2)+d(y_1,y_2)$. This together with the fact that $d(\theta_n(Z_n),Z)\le\gep_n$, implies:

\medskip
\noindent{\bf Claim:}
$\forall \gd\ge0,~g\in G_n,~x\in X$ and $x'\in X_n$ such that $d(\phi_n(x'),x)\le \gd$ we have
$$
 d(\psi_n(g)\cdot x,\phi_n(g\cdot x'))\le 3\gep_n+\gd.
$$

To prove the claim, pick a point $(h,y,z)\in Z$ of distance $\le\gep_n$ from 
$$
 \theta_n (g,x',g\cdot x')=(\psi_n(g),\phi_n(x'),\phi_n(g\cdot x')).
$$ 
Then $d(h,\psi_n(g))\le\gep_n$ and $d(y,x)\le d(y,\phi_n(x'))+d(\phi_n(x'),x)\le \gep_n+\gd$, which implies, since both $(h,y,z)$ and $(\psi_n(g),x,\psi_n(g)\cdot x)$ are in $Z$, that $d(z,\psi_n(g)\cdot x)\le 2\gep_n+\gd$. The claim follows since $d(\phi_n (g\cdot x'),z)\le\gep_n$.

\medskip

Now consider $g,h\in G_n$ and $x\in X$. Picking $x'\in X_n$ such that $d(\phi_n(x'),x)\le\gep_n$ we get from the claim (with $\gd=\gep_n$) that
$$
 d(\psi_n(g)\cdot x,\phi_n(g\cdot x'))\le 4\gep_n~\text{and}~d(\psi_n(hg)\cdot x,\phi_n(hg\cdot x'))\le 4\gep_n
$$
and (taking $\gd=0$)
$$
 d(\psi_n(h)\cdot \phi_n(g\cdot x'),\phi_n(hg\cdot x'))\le 3\gep_n.
$$
Thus
$$
 d(\psi_n(h)\cdot\psi_n(g)\cdot x,\psi_n(hg)\cdot x)\le 11\gep_n,
$$
and since $x$ is arbitrary and
$$
 d(\psi_n(h)\cdot\psi_n(g),\psi_n(hg))=\max_{x\in X}d(\psi_n(h)\cdot\psi_n(g)\cdot x,\psi_n(hg)\cdot x),
$$
the lemma is proved.
\end{proof}


\section{Quasi finite groups}\label{sec:quasi}

\begin{defn}
We will say that a metric group $H$ is {\it quasi finite}\footnote{In \cite{Turing}, Turing used the term ``approximable by finite groups" for the same notion, but here we already used a similar term to express something different.}
if for every $\gep>0$ there is a finite group $A$ and an $\gep$-quasi morphism $f:A\to H$ with an $\gep$-dense image. 
\end{defn}

For example, in the situation of the previous section, if we  
suppose in addition that the spaces $X_n$ are finite, then the groups $G_n\le\text{Isom}(X_n)$ are finite as well, and hence $G$ is quasi finite (c.f. Proposition \ref{prop:G} and Lemma \ref{lem:8epsilon}).

The aim of this section is to give a classification of quasi finite compact groups (see Proposition \ref{prop:turing}). 
Note that being quasi finite is a topological property, i.e. independent of the chosen metric. Thus when studying this notion for compact groups we may restrict ourselves to bi-invariant metrics only. 

Let $H$ be a compact group with a bi-invariant metric. If $N\lhd H$ is a closed normal subgroup, the induced metric on the quotient $H/N$ is defined by the obvious way. Clearly if $H$ is quasi finite then so is $H/N$.


\begin{lem}\label{lem:open}
If $H$ is quasi finite and $O\le H$ is an open subgroup then $O$ is also quasi finite.
\end{lem}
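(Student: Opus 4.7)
The plan is to restrict the given quasi morphism on $H$ to the preimage of $O$ and argue that, for sufficiently small $\gep$, this preimage is automatically a finite subgroup of $A$ mapping $\gd$-densely into $O$. The single crucial topological input is that $O$, being an open subgroup of the compact group $H$, is clopen; hence $O$ and $H\setminus O$ are disjoint closed sets in a compact metric space and $\rho:=d(O,H\setminus O)>0$. In particular, whenever $\gep<\rho$, any point of $H$ within distance $\gep$ of $O$ must already lie in $O$.

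Given $\gd>0$, I would choose $\gep$ with $\gep<\min(\gd,\rho/2)$, apply quasi finiteness of $H$ to obtain a finite group $A$ and an $\gep$-quasi morphism $f\colon A\to H$ with $\gep$-dense image, and set $A_O:=f^{-1}(O)$ together with the restriction $f':=f|_{A_O}\colon A_O\to O$. The heart of the argument is then to verify that $A_O$ is a subgroup of $A$. For the identity, the quasi morphism inequality applied to $e_A=e_A\cdot e_A$ gives $d(f(e_A)^2,f(e_A))\le\gep$, which by left-invariance reduces to $d(f(e_A),e_H)\le\gep$, so $f(e_A)\in O$. For closure under products, if $f(a),f(b)\in O$ then $f(a)f(b)\in O$, and $d(f(ab),f(a)f(b))\le\gep<\rho$ forces $f(ab)\in O$. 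For inverses, the quasi morphism identity at $e_A=a\cdot a^{-1}$ combined with the identity estimate yields $d(f(a)f(a^{-1}),e_H)\le 2\gep$; left multiplication by $f(a)^{-1}$ and bi-invariance then give $d(f(a^{-1}),f(a)^{-1})\le 2\gep<\rho$, and since $f(a)^{-1}\in O$ we conclude $f(a^{-1})\in O$.

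Once $A_O$ is recognised as a subgroup, $f'$ is an $\gep$-quasi morphism into $O$ by restriction, and the $\gd$-density is immediate: for any $o\in O$ one picks $a\in A$ with $d(f(a),o)<\gep$, after which $d(f(a),O)<\gep<\rho$ automatically places $a\in A_O$. The step I expect to require the most care is the closure of $A_O$ under inverses, since one must simultaneously control the error in the quasi morphism identity at $e_A$, the failure of $f(e_A)$ to equal $e_H$, and the passage from $f(a)f(a^{-1})\approx e_H$ to $f(a^{-1})\approx f(a)^{-1}$ via bi-invariance; it is precisely this accumulated factor of $2$ that dictates the threshold $\gep<\rho/2$.
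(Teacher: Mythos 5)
Your proof is correct and follows essentially the same route as the paper: choose the quasi-morphism error smaller than the (positive) separation between $O$ and its complement, observe that $f^{-1}(O)$ is then a subgroup, and restrict. You simply fill in the details the paper dismisses as ``easy to see,'' and your bookkeeping (the factor of $2$ forcing $\gep<\rho/2$ in the inverse step) is accurate.
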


\begin{proof}
Let $\gd>0$ be smaller than the distance from $O$ to the nearest non-trivial coset $hO$. It is easy to see that if $f:F\to H$ is a $\gd$-quasi morphism then $f^{-1}(O)$ is a subgroup of $F$. Additionally, if $f(F)$ is $\gd$-dense in $H$ then $f(f^{-1}(O))=f(F)\cap O$ is $\gd$-dense in $O$.
\end{proof}

\begin{lem}\label{lem:H^0}
Suppose that a compact group $H$ is quasi finite. Then the identity connected component $H^\circ$ is also quasi finite.
\end{lem}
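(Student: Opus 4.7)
The obstruction to invoking Lemma \ref{lem:open} directly is that $H^\circ$ need not be open in $H$. My plan is to first approximate $H^\circ$ in Hausdorff distance by an open subgroup $N\le H$, apply Lemma \ref{lem:open} to that $N$, and then push the resulting quasi morphism down to $H^\circ$ by a nearest-point modification.

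Fix a bi-invariant metric on $H$. Since $H$ is compact, the quotient $H/H^\circ$ is a compact totally disconnected group, hence profinite, and so open subgroups form a neighborhood basis at the identity. For any $\gd>0$ I would pull back an open subgroup of $H/H^\circ$ contained in the image of $B_\gd(e)$ to obtain an open subgroup $N\le H$ with
\[
 H^\circ\le N\subseteq H^\circ\cdot B_\gd(e)=B_\gd(H^\circ),
\]
the last equality using bi-invariance.

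Now, given $\gep>0$, I would choose such an $N$ with $\gd=\gep/4$ and apply Lemma \ref{lem:open} to produce a finite group $A$ and a $(\gep/4)$-quasi morphism $f\colon A\to N$ with $(\gep/4)$-dense image. Since $f(a)\in N\subseteq B_{\gep/4}(H^\circ)$ for every $a$, I can pick $\ti f(a)\in H^\circ$ with $d(\ti f(a),f(a))\le \gep/4$; this defines a map $\ti f\colon A\to H^\circ$. Triangle inequality combined with bi-invariance of the metric gives
\[
 d(\ti f(a)\ti f(b),\ti f(ab))\le d(\ti f(a),f(a))+d(\ti f(b),f(b))+d(f(a)f(b),f(ab))+d(f(ab),\ti f(ab))\le \gep,
\]
so $\ti f$ is an $\gep$-quasi morphism; and any $h\in H^\circ\subseteq N$ admits some $a\in A$ with $d(f(a),h)\le\gep/4$, whence $d(\ti f(a),h)\le \gep/2$, giving $\gep$-density.

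The only genuinely delicate point is the first step: producing open subgroups of $H$ that approximate $H^\circ$ in Hausdorff distance. This is precisely the content of the profinite structure of $H/H^\circ$, and it substitutes here for the openness hypothesis that drove the proof of Lemma \ref{lem:open}. Once that is in hand, the remainder of the argument is a routine triangle inequality together with bi-invariance.
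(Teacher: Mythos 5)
Your proposal is correct and follows essentially the same route as the paper: pull back an open subgroup of the totally disconnected quotient $H/H^\circ$ lying in a small neighborhood of the identity, apply Lemma \ref{lem:open} to it, and then move each value of the resulting quasi morphism to a nearby point of $H^\circ$, using bi-invariance to control the loss in the quasi-morphism constant and in density. The only difference is bookkeeping of constants ($\gep/4$ throughout versus the paper's $4\gep$-quasi morphism with $2\gep$-dense image), which is immaterial.
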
 

\begin{proof}
Let $\gep>0$. Since $H/H^\circ$ is totally disconnected, the $\gep$-neighborhood of the identity in $H/H^\circ$ admits an open subgroup $\ti O$ (see \cite[Theorem 2.5]{MZ}). Let $O$ be the pre-image of $\ti O$ in $H$. By Lemma \ref{lem:open} we have a finite group $F$ and an $\gep$-quasi morphism $\rho:F\to O$ with an $\gep$-dense image. For every $\gc\in F$ pick arbitrarily an element in $H^\circ\cap B(\rho(\gc),\gep)$ and denote it by $\rho'(\gc)$. It is easy to varify that, since the metric is bi-invariant, $\rho':F\to H^\circ$ is a $4\gep$-quasi morphism and its image is $2\gep$-dense.
\end{proof}

\begin{prop}[A generalised version of Turing's theorem]\label{prop:turing}
A compact group $H$ is quasi finite iff its identity connected component $H^\circ$ is abelian.
\end{prop}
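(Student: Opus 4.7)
The plan is to treat the two implications separately. In the forward direction I would reduce to Turing's theorem (Remark \ref{rem}.1) via Lemma \ref{lem:H^0} and passage to Lie quotients. In the converse direction I would build explicit $\epsilon$-dense finite subgroups in every Lie quotient of $H$, using a cohomological trick to control the cocycle of the extension $1 \to H^\circ \to H \to H/H^\circ \to 1$, and then lift through the inverse limit.

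For the forward direction, assume $H$ is quasi finite. By Lemma \ref{lem:H^0} the identity component $H^\circ$ is also quasi finite, so I may replace $H$ by $H^\circ$ and assume $H$ is connected. Quasi finiteness clearly passes to quotients, since if $\rho \colon F \to H$ is an $\epsilon$-quasi morphism with $\epsilon$-dense image and $N \lhd H$ is closed, then $F \to H \to H/N$ inherits both properties. Every connected compact group is an inverse limit of connected compact Lie groups, and if every Lie quotient of $H$ were abelian then commutators would vanish in each quotient and hence in $H$ itself, so a non-abelian $H$ would admit a non-abelian connected compact Lie group $L$ as a quotient. It therefore suffices to rule out $L$ being quasi finite. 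Embedding $L \hookrightarrow U(n)$ via Peter--Weyl, I would invoke Kazhdan's stability theorem (an averaging argument over the finite source group): a sufficiently accurate $\epsilon$-quasi morphism $F \to U(n)$ from a finite group can be perturbed to an honest homomorphism $\tilde \rho$ at distance $O_n(\epsilon)$. Then $\tilde \rho(F) \le U(n)$ is a finite subgroup which, by Jordan's theorem, contains an abelian subgroup $A_\epsilon$ of index at most $m(n)$. Passing to Hausdorff limits in the space of closed subsets of $U(n)$ as $\epsilon \to 0$ along a subsequence, $\tilde \rho(F)$ approaches $L$ and $A_\epsilon$ approaches a closed abelian subgroup $A \le L$ whose cosets still number at most $m(n)$; the connectedness of $L$ then forces $A = L$, contradicting non-abelianness.

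For the converse, suppose $H^\circ$ is abelian and write $H = \varprojlim H_\alpha$ as an inverse limit of compact Lie group quotients; each $(H_\alpha)^\circ$ is a torus $T_\alpha$ with finite quotient $F_\alpha := H_\alpha / T_\alpha$. Given $\epsilon > 0$ I would pick $\alpha$ so that the kernel of $H \to H_\alpha$ lies in the $\epsilon$-ball at the identity, construct an $\epsilon$-dense finite subgroup of $H_\alpha$, and lift it through a set-theoretic section $H_\alpha \to H$; the lift is an $O(\epsilon)$-quasi morphism with $O(\epsilon)$-dense image. To produce the finite dense subgroup of $H_\alpha$, fix a set-theoretic section $\sigma \colon F_\alpha \to H_\alpha$, which encodes multiplication in $H_\alpha$ via a cocycle $c \in Z^2(F_\alpha, T_\alpha)$. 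The class $[c]$ is killed by $|F_\alpha|$, a standard fact for the cohomology of finite groups. From the short exact sequence $0 \to T_\alpha[N] \to T_\alpha \xrightarrow{\cdot N} T_\alpha \to 0$ and its associated long exact sequence, for any $N$ divisible by $|F_\alpha|$ the class $[c]$ lies in the image of $H^2(F_\alpha, T_\alpha[N]) \to H^2(F_\alpha, T_\alpha)$. Modifying $\sigma$ by an appropriate $T_\alpha$-valued cochain, I obtain a section $\tilde \sigma$ whose cocycle is already $T_\alpha[N]$-valued, so that $T_\alpha[N] \cdot \tilde \sigma(F_\alpha)$ is closed under multiplication and is therefore a genuine finite subgroup of $H_\alpha$; for $N$ large enough it is $\epsilon$-dense.

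The main technical obstacle in the forward direction is Kazhdan's stability step: without passing from a quasi morphism to a nearby honest representation, Jordan's theorem cannot be applied. In the converse, the delicate point is the possible non-splitness of the extension $1 \to T_\alpha \to H_\alpha \to F_\alpha \to 1$, and the cohomological manoeuvre with multiplication by $N$ (a large multiple of $|F_\alpha|$) is precisely what sidesteps it and produces an actual finite subgroup rather than only a quasi morphism.
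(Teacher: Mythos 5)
Your proposal is correct, and it splits from the paper in an interesting way. The forward direction is essentially the paper's: reduce to $H^\circ$ via Lemma \ref{lem:H^0}, pass to connected Lie quotients through the inverse limit, and kill a non-abelian Lie quotient $L$ by Turing's theorem --- except that where the paper simply cites \cite{Turing}, you re-prove it via Kazhdan-type stability (perturbing the quasi morphism $F\to U(n)$ to an honest representation) plus Jordan's theorem and a Hausdorff-limit argument on the abelian subgroups of bounded index; this is precisely how \cite{Turing} and \cite{Kazhdan} argue, so you lose nothing but do extra work. The converse is where you genuinely diverge. The paper passes to a Lie quotient $H/N$ with abelian identity component (via \cite[Theorem 4.6]{MZ}, exactly as you do) and then invokes Lemma \ref{lem:dense-direct}, whose proof rests on Platonov's lemma: a finite subgroup $\Lambda$ meeting every component, obtained by complexifying to an algebraic group and conjugating back into the maximal compact. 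You instead control the extension $1\to T_\alpha\to H_\alpha\to F_\alpha\to 1$ cohomologically: since $|F_\alpha|$ kills $H^2(F_\alpha,T_\alpha)$ and $T_\alpha$ is divisible, the long exact sequence for $0\to T_\alpha[N]\to T_\alpha\to T_\alpha\to 0$ lets you choose a section whose cocycle is $T_\alpha[N]$-valued, so that $T_\alpha[N]\cdot\tilde\sigma(F_\alpha)$ is an honest finite subgroup, $\epsilon$-dense for large $N$. This argument is sound (conjugation preserves $N$-torsion, so the set is multiplicatively closed, and a finite multiplicatively closed subset of a group is a subgroup) and is more self-contained, avoiding algebraic groups and the Borel--Serre/Platonov machinery entirely; what it buys less of is the stronger conclusion of Lemma \ref{lem:dense-direct} --- a dense subgroup that is a \emph{direct limit} of finite subgroups --- which the paper reuses in the proof of the ``if'' direction of Theorem \ref{thm}, so your shortcut serves this proposition but would not replace that lemma elsewhere in the paper.
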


We will make use of:

\begin{lem}\label{lem:dense-direct}
Let $L$ be a compact Lie group with a commutative identity connected component $L^\circ$. Then $L$ admits a dense subgroup which is a direct limit of finite subgroups.
\end{lem}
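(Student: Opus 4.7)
My plan has two phases. The bulk of the work lies in producing a finite subgroup $F_0 \le L$ that surjects onto the (finite) component group $F := L/L^\circ$; once that is established, the rest is immediate. Indeed, writing $T := L^\circ$ (a torus) and letting $T[n]$ denote its $n$-torsion, $F_0$ normalises $T[n]$ (since $T[n]$ is characteristic in $T$), so $F_n := F_0 \cdot T[n!]$ is a finite subgroup of $L$. The chain $F_1 \le F_2 \le \cdots$ has union $F_0 \cdot T_{\mathrm{tor}}$, where $T_{\mathrm{tor}} = \bigcup_n T[n!]$ is the torsion subgroup of $T$. This union is dense in $L$ because $F_0$ meets every coset of $T$ while $T_{\mathrm{tor}}$ is dense in $T$.

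To construct $F_0$, view $L$ as an extension
\[
 1 \to T \to L \to F \to 1,
\]
classified by a class $[c] \in H^2(F, T)$, where $F$ acts on $T$ by conjugation. Any set-theoretic lifts $g_f \in L$ of $f \in F$ yield a cocycle $c(f_1,f_2) := g_{f_1} g_{f_2} g_{f_1 f_2}^{-1} \in T$, and changing lifts via $g_f \mapsto b(f) g_f$ alters $c$ by a coboundary. If the resulting cocycle can be arranged to take values in the finite $F$-invariant subgroup $T[n]$ for some $n$, then $F_0 := T[n] \cdot \{g_f\}_{f \in F}$ is closed under multiplication and inversion in $L$ (using $F$-invariance of $T[n]$) and is the desired finite subgroup surjecting onto $F$.

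It remains to show that $[c]$ lies in the image of $H^2(F, T[n]) \to H^2(F, T)$ for some $n$. Consider the short exact sequence $0 \to T_{\mathrm{tor}} \to T \to T/T_{\mathrm{tor}} \to 0$. The quotient $T/T_{\mathrm{tor}} \cong \BR^d/\BQ^d$ is a $\BQ$-vector space on which $F$ acts, and since $|F|$ is invertible on any $\BQ$-module, Maschke's averaging gives $H^i(F, T/T_{\mathrm{tor}}) = 0$ for all $i \ge 1$. The long exact sequence then collapses to an isomorphism $H^2(F, T_{\mathrm{tor}}) \cong H^2(F, T)$. Since finite-group cohomology commutes with filtered colimits and $T_{\mathrm{tor}} = \varinjlim_n T[n]$, we get $H^2(F, T_{\mathrm{tor}}) = \varinjlim_n H^2(F, T[n])$, so $[c]$ lifts to some $H^2(F, T[n])$ as required. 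The main conceptual step is the vanishing $H^i(F, T/T_{\mathrm{tor}}) = 0$---the assertion that the extension is ``detected on torsion''---while everything else is routine cocycle bookkeeping. A more hands-on alternative would be to first prove that torsion is dense in every coset of $T$ (via the norm map $t \mapsto t + \alpha(t) + \cdots + \alpha^{m-1}(t)$ associated to the conjugation action of a representative $g$ with $g^m \in T$), then to carefully assemble compatible torsion lifts of a generating set of $F$; but controlling the growth of the subgroup they generate is exactly the difficulty the cohomological route bypasses.
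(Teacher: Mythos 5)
Your proof is correct, but it takes a genuinely different route from the paper's. The paper splits the work the same way you do --- first a finite subgroup meeting every component, then saturation by the torsion of the torus --- but it obtains the finite transversal from Lemma \ref{lem:platonov}: it embeds $L$ as a maximal compact of a complex algebraic group, invokes Platonov's lemma to get a finite subgroup meeting every Zariski component, and conjugates it back into $L$ via Mostow; it then forms $B_n=\langle \Lambda, A_n\rangle$ and checks finiteness directly, exactly as in your closing remark (the ``growth'' you worry about is a non-issue there because $B_n\cap L^\circ$ is abelian and generated by finitely many torsion elements, hence finite). You instead construct the finite subgroup $F_0$ surjecting onto $L/L^\circ$ by a cohomological argument: the extension class in $H^2(F,T)$ lifts to $H^2(F,T_{\mathrm{tor}})=\varinjlim_n H^2(F,T[n])$ because $T/T_{\mathrm{tor}}$ is a $\BQ[F]$-module with vanishing higher cohomology, so the cocycle can be pushed into some $T[n]$ by changing lifts. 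This is in effect a self-contained, elementary proof of the special case of Lemma \ref{lem:platonov} that is actually needed (the case $K^\circ$ abelian, which the paper itself notes is what Lemma \ref{lem:dense-direct} recovers), at the cost of exploiting commutativity of $L^\circ$ in an essential way; the paper's route is shorter modulo the heavier black boxes (complexification, Platonov, Mostow) but yields the transversal for arbitrary compact Lie groups. Your finishing step ($F_n=F_0\cdot T[n!]$, union dense because $T_{\mathrm{tor}}$ is dense in the torus and $F_0$ hits every coset) matches the paper's in substance.
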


In particular, Lemma \ref{lem:dense-direct} implies that $L$ admits a finite subgroup which meets every connected component of $L$. This is a special case of:

\begin{lem}\label{lem:platonov}
Every compact Lie group $K$ admits a finite subgroup $\Lambda$ such that $K=\Lambda K^\circ$, where $K^\circ$ is the identity connected component of $K$.
\end{lem}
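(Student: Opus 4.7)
The plan is to reduce the statement to the case already handled by Lemma \ref{lem:dense-direct}, namely when the identity connected component is abelian, by passing from $K$ to the normalizer of a maximal torus of $K^\circ$.

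First I would fix a maximal torus $T$ of $K^\circ$ and verify that $N_K(T)\cdot K^\circ = K$. For any $g\in K$, the conjugate $gTg^{-1}$ lies inside $K^\circ$ (since $K^\circ$ is normal in $K$) and is a maximal torus there. By the classical conjugacy of maximal tori in a connected compact Lie group, there is $h\in K^\circ$ with $h(gTg^{-1})h^{-1}=T$, so $hg\in N_K(T)$, proving the claim.

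Next I would identify the identity component of $N_K(T)$ with $T$. Since $N_K(T)^\circ$ is connected and contains the identity, it lies in $K^\circ$ and hence in $N_{K^\circ}(T)$; the finiteness of the Weyl group $N_{K^\circ}(T)/T$ gives $N_{K^\circ}(T)^\circ=T$, and the reverse inclusion $T\subseteq N_K(T)^\circ$ is immediate. Thus $N_K(T)$ is a compact Lie group whose identity component is the torus $T$, and Lemma \ref{lem:dense-direct} applies to $L=N_K(T)$.

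Finally, Lemma \ref{lem:dense-direct} furnishes an ascending chain of finite subgroups $F_1\subseteq F_2\subseteq\cdots$ of $N_K(T)$ whose union is dense. The quotient $N_K(T)/T$ is finite, so $N_K(T)$ has only finitely many (open) components, and density forces some $F_N$ to meet every one of them, i.e.\ $F_N\cdot T=N_K(T)$. Setting $\Lambda:=F_N$ then gives $\Lambda K^\circ=F_N T K^\circ=N_K(T)K^\circ=K$, as required. The only substantive point is the first reduction; once it is in place, everything else is formal and the main work has already been done in Lemma \ref{lem:dense-direct}.
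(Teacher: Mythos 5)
Your reduction to the normalizer of a maximal torus is correct in every detail: $K=N_K(T)\,K^\circ$ follows from conjugacy of maximal tori in $K^\circ$, the identification $N_K(T)^\circ=T$ follows from finiteness of the Weyl group together with finiteness of $K/K^\circ$, and a finite subgroup of $N_K(T)$ meeting all of its finitely many components does yield the desired $\Lambda$ for $K$. This is also a genuinely different route from the paper, which instead realizes $K$ as a maximal compact subgroup of a complex algebraic group $\BG(\BC)$, invokes Platonov's lemma to produce a finite subgroup meeting every component of $\BG(\BC)$, and conjugates it into $K$ via Mostow's theorem; your approach stays entirely inside compact Lie theory.

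However, as written the argument is circular in the context of this paper. Your final step applies Lemma \ref{lem:dense-direct} to $L=N_K(T)$, but the paper's proof of Lemma \ref{lem:dense-direct} begins by choosing ``a finite subgroup $\Lambda$ that meets every connected component of $L$'' with an explicit citation of Lemma \ref{lem:platonov} --- that is, it uses exactly the statement you are proving, applied to the group $L=N_K(T)$ whose identity component is a torus. So what you have actually done is reduce Lemma \ref{lem:platonov} for general $K$ to the special case of a compact Lie group with toral identity component; that case still requires an independent argument. One is available: for the extension $1\to T\to N_K(T)\to W\to 1$ with $W$ finite, the class in $H^2(W,T)$ is annihilated by $m=|W|$ (since $H^k(W,\BR^n)=0$ for $k\ge 1$, the exponential sequence identifies $H^2(W,T)$ with $H^3(W,\BZ^n)$), hence by the exact sequence coming from $0\to T[m]\to T\to T\to 0$ it lifts to a class in $H^2(W,T[m])$; the resulting finite extension of $W$ by $T[m]$ maps to $N_K(T)$ with image a finite subgroup surjecting onto $W$, which is what you need. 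Supplying this (or any equivalent treatment of the toral case) would close the gap and make your proof complete without the algebraic-group machinery.
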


\begin{proof}
Recall that there is a complex algebraic group $\BG(\BC)$ with maximal compact subgroup isomorphic to $K$ (see \cite[3.4.4 and 5.2.5]{OV}). By a lemma of Platonov \cite[10.10]{We} (see also \cite{BS}), there is a finite group $\Lambda\le \BG(\BC)$ which meets any connected component of $\BG(\BC)$ (recall that the Zariski connected components of a complex algebraic group coincide with the Hausdorff connected components (see \cite[Theorem 3.5]{PR}). Since $\BG(\BC)$ has finitely many connected components, $\Lambda$, being compact, is conjugated to a subgroup of the maximal compact $K$ (see \cite{M}).
\end{proof}

\begin{proof}[Proof of Lemma \ref{lem:dense-direct}]
Obviously, the identity connected component $L^\circ$, which is a compact torus, admits a dense subgroup $\gC$ which is a direct union of finite groups $\gC=\cup A_n$.
Let $\Lambda$ be a finite subgroup that meets every connected component of $L$ (see Lemma \ref{lem:platonov}).
For every $n$ set $B_n=\langle \Lambda,A_n\rangle$. Then $B_n\cap L^\circ$ is generated by the finite set $\bigcup_{\gl\in\Lambda}A_n^{\gl}\cup(\Lambda\cap L^\circ)$ which consists of torsion elements. Since $B_n\cap L^\circ$ is abelian, it is finite, and therefore $B_n$ is finite as well.
Finally, the direct limit of the $B_n$ is the dense subgroup $\langle\Lambda,\gC\rangle$ of $L$.
\end{proof}

\begin{proof}[Proof of Proposition \ref{prop:turing}]
Let $H$ be a compact group with a bi-invariant metric. By the Peter--Weyl theorem (see \cite[Theorem 3.3.4]{Zimmer}) $H^\circ$ is an inverse limit $\varprojlim H_n$ where the $H_n$ are compact connected Lie groups. If $H$ is quasi finite then, by Lemma \ref{lem:H^0} so is $H^\circ$ and hence also $H_n$ for every $n$. By Turing's theorem \cite{Turing} a connected Lie group which is quasi finite is abelian. Hence the $H_n$ are abelian which in turn implies that $H^\circ$ is abelian. 

Conversely, suppose that $H^\circ$ is commutative and let $\gep>0$. By \cite[Theorem 4.6]{MZ} the $\gep$-neighborhood of identity in $H$ admits a normal subgroup $N$ such that $H/N$ is a Lie group and the identity component of $H$ is mapped onto the identity component of $H/N$. Thus $(H/N)$ is a compact Lie group with a commutative identity component. By Lemma \ref{lem:dense-direct} $H/N$ admits an $\gep$-dense finite subgroup $\gD$. Finally an arbitrary lift $\gD\to H$ is a $4\gep$-quasi morphism with $2\gep$-dense image.
\end{proof}


\section{Completing the proof of the main results}

Let us return now to the proof of Theorem \ref{thm}. Suppose that $X$ is approximable by finite homogeneous metric spaces.
It follows from Proposition \ref{prop:G} and Lemma \ref{lem:8epsilon} that $\text{Isom}(X)$ admits a closed subgroup $G$ with the following properties:
\begin{enumerate}
\item $G$ acts transitively on $X$. 
\item $G$ is quasi finite.
\end{enumerate}
 By Proposition \ref{prop:turing}, $(2)$ implies that the identity connected component $G^\circ$ is abelian. 
Thus we obtained the ``only if" side of Theorem \ref{thm}.

\medskip

In order to prove the "if" side, consider a compact metric space $X$ which admits a compact transitive group of isometries $G$ with $G^\circ$ abelian. 
By \cite[Theorem 4.6]{MZ} there is a descending chain of normal subgroups $K_n\lhd G$ such that $G=\varprojlim G/K_n$ and $G_n=G/K_n$ is a Lie group with a commutative identity connected component. By Lemma \ref{lem:dense-direct}, $G_n$ admits a dense subgroup which is a direct limit of finite groups. In particular every homogeneous $G_n$-space can be approximated by finite homogeneous metric spaces. Let $X_n=K_n\backslash X$ be the orbit space of $K_n$ in $X$ with the induced metric. Then $X=\varprojlim X_n$. It follows that $X_n\to X$ in the Gromov--Hausdorff metric, and since every $X_n$ is approximable by finite homogeneous metric spaces, so is $X$.

\qed

The deduction of Corollary \ref{cor} from Theorem \ref{thm} makes use of:

\begin{lem}\label{lem:connected-transitive}
Let $H$ be a compact group acting continuously and transitively by isometries on a metric space $Y$. Then the identity connected component $H^\circ$ acts transitively on every connected component of $Y$.
\end{lem}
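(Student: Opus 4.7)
The plan is to show that the $H^\circ$-orbits on $Y$ coincide with the connected components of $Y$; transitivity on each component then follows tautologically. I would first fix $y_0\in Y$, set $S=\{h\in H:h\cdot y_0=y_0\}$, and identify $Y$ with the coset space $H/S$ via $hS\mapsto h\cdot y_0$. Since $H^\circ$ is a closed normal (indeed characteristic) subgroup of $H$, the product $H^\circ S$ is a closed subgroup containing $S$, and the $H^\circ$-orbit of $y_0$ corresponds precisely to the subset $H^\circ S/S \subset H/S$.

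Next I would consider the natural continuous surjection $H/S \to H/(H^\circ S)$. Because this map factors through $H/H^\circ$, its target is a continuous quotient of the profinite group $H/H^\circ$ by a closed subgroup, hence is compact, Hausdorff, and totally disconnected. Consequently any connected subset of $Y=H/S$ is sent to a single point, so must lie in a single fiber of this map. The fiber over the image of $y_0$ is exactly $H^\circ\cdot y_0$, which shows that the connected component of $y_0$ in $Y$ is contained in $H^\circ\cdot y_0$.

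The reverse inclusion is immediate: $H^\circ\cdot y_0$ is the continuous image of the connected group $H^\circ$ under the orbit map $h\mapsto h\cdot y_0$, hence is itself connected and therefore lies in the connected component of $y_0$. Combining both inclusions, $H^\circ\cdot y_0$ equals the connected component of $y_0$. Since $y_0$ was arbitrary, every connected component of $Y$ is an $H^\circ$-orbit, which is exactly the desired conclusion.

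The only structural fact needed is that $H/H^\circ$ is totally disconnected (equivalently, profinite), which is standard for compact topological groups. The main thing to keep track of is the interplay between the connected orbit $H^\circ\cdot y_0$ and the totally disconnected quotient $H/(H^\circ S)$ in the factorization $H\to H/S\to H/(H^\circ S)$; once this diagram is set up, the two inclusions forcing equality of the orbit with the component are essentially automatic.
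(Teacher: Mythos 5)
Your proof is correct and rests on the same core mechanism as the paper's: the quotient of $Y$ by the $H^\circ$-action is a homogeneous space of the profinite group $H/H^\circ$, hence totally disconnected, while $H^\circ$-orbits are connected, forcing orbits to coincide with components. The paper localizes to one component and its setwise stabilizer whereas you work globally with the point stabilizer $S$ and the coset fibration $H/S\to H/(H^\circ S)$, but this is only a cosmetic difference (your identification $Y\cong H/S$ is justified since $H$ is compact and $Y$ Hausdorff).
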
 
 
In particular, the lemma says that a homogenuous space of a pro-finite group is totally disconnected, and indeed, if $P$ is a pro-finite group and $L\le P$ is a closed subgroup, the sets $KhL$ where $K$ runs over the open subgroups of $P$ and $h\in P$ is arbitrary, form a base for the topology of $P/L$ consisting of open-closed sets.
 
\begin{proof}
 Let $Y^\circ$ be a connected component of $Y$ and let $H_1$ be the stabilizer of $Y^\circ$, $H_1=\{g\in H: g\cdot Y^\circ=Y^\circ\}$. Then $H_1$ is a group containing $H^\circ$. Let $P=H_1/H^\circ$ and let $Y^\circ/\sim$ be the orbit space of $H^\circ$ in $Y^\circ$ with the quotient metric. Then $P$ is a pro-finite group acting transitively on the connected space $Y^\circ/\sim$. It follows that $Y^\circ/\sim$ is a singleton, i.e. that $H^\circ$ is transitive on $Y^\circ$.
\end{proof}
 
\begin{proof}[Proof of Corollary \ref{cor}]
Suppose that $X$ is a compact metric space which is approximable by finite homogeneous ones. By Theorem \ref{thm}, $\text{Isom}(X)$ has a transitive compact subgroup $G$ with $G^\circ$ abelian. Let $X^\circ$ be a connected component of $X$, $x_0\in X^\circ$ a point and $Q=(G^\circ)_{x_0}$ its stabilizer group in $G^\circ$. By Lemma \ref{lem:connected-transitive}, $G^\circ$ acts transitively on $X^\circ$ and hence $X^\circ$ is homeomorphic to $G^\circ/Q$. 
Since $G^\circ$ is abelian, $Q$ is normal and $G^\circ/Q$ is a compact abelian group. By the Peter--Weyl theorem $G^\circ/Q=\varprojlim T_n$ where each $T_n$ is a finite dimensional torus. 
\end{proof}

\medskip

We end this note with a:

\begin{proof}[Proof of Corollary \ref{cor:distance-transitive}]

Let $(X_n)$ be a sequence as in \ref{cor:distance-transitive} and suppose that $X_n\to X$. Then $X$ is a length space and hence, by Theorem \ref{thm} (see Remark \ref{rem}.4), a torus (of finite or infinite dimension). Let $x\in X$ be a point, and for $r>0$ consider the $r$-sphere $S_X(r,x)$ around $x$ in $X$. It is easy to see that $S_X(r,x)$ is the limit of $S_{X_n}(r_n,x_n)$ for an appropriate choice of $r_n\to r$ and $x_n\in X_n$. 
If $\dim X>1$ then there is $r$ and a connected component of $S_X(r,x)$ which is not a singleton, and we claim that this is impossible. Indeed, letting $G$ denote a limit group of $G_n=\text{Isom}(X_n)$, it follows from Proposition \ref{prop:G} and Lemma \ref{lem:connected-transitive} that the identity connected component $G^\circ$ acts transitively on $X$. 
Similarly, since the stabilizer group $G_x$ contains a limit of the the stabilizer group $(G_n)_{x_n}$, and the later act transitively on the corresponding $S_{X_n}(r_n,x_n)$,
we deduce that its identity connected component $(G_x)^\circ$ acts transitively on every connected component of $S_X(r,x)$.
However, if $b\in G^\circ$ is an element that takes $x$ into a non-singleton connected component of $S_X(r,x)$, and $a\in G_x^\circ\le G^\circ$ is an element that moves $b\cdot x$, it is easy to see that $a$ and $b$ do not commute, in contrary to the commutativity of $G^\circ$ which is guaranteed by Theorem \ref{thm}. 
\end{proof}


\begin{thebibliography}{999999}

\bibitem[AGG01]{AGG} M. A. Alekseev, L. Yu. Glebskii, and E. I. Gordon, On approximation of groups, group actions and Hopf algebras, Journal of Mathematical Sciences, Vol. 107, No. 5, 2001.

\bibitem[BFT12]{Itai} I. Benjamini, H. Finucane, R. Tessera, On the scaling limit of finite vertex transitive graphs with large diameter, arXiv:1203.5624.

\bibitem[BS64]{BS} A. Borel, J.P. Serre, 
ThŽormes de finitude en cohomologie galoisienne,
Comment. Math. Helv. 39 1964 111--164.

\bibitem[BH99]{BH} M. Bridson, A. Haefliger, \emph{Metric Spaces of Non-Positive
Curvature}, Grundl. der Math. Wiss. 319, Springer Verlag, 1999.

\bibitem[J1878]{J} C.  Jordan, Mkmoire  sur  les \'equations  diffir\'entielles  liniaires  \'a  int\'egrale  alg\'ebrique,  J. 
Math.  84  (1878),  89--215.

\bibitem[K82]{Kazhdan} D. Kazhdan, On $\gep$-representations. Israel J. Math. 43 (1982), no. 4, 315--323.

\bibitem[MZ55]{MZ} D. Montgomery, L. Zippin, Topological transformation groups. Reprint of the 1955 original. Robert E. Krieger Publishing Co., Huntington, N.Y., 1974. 

\bibitem[M55]{M} G.D. Mostow, Self adjoint groups. Ann. of Math. 62 (1955) pp.
44--55.

\bibitem[OV90]{OV} A.L. Onishchik, E.B.  Vinberg, Lie groups and algebraic groups. Translated from the Russian and with a preface by D. A. Leites. Springer Series in Soviet Mathematics. Springer-Verlag, Berlin, 1990. 

\bibitem[PR91]{PR} V. Platonov, A. Rapinchuk, Algebraic groups and number theory. Translated from the 1991 Russian original by Rachel Rowen. Pure and Applied Mathematics, 139. Academic Press, Inc., Boston, MA, 1994. 
  
\bibitem[T38]{Turing} A.M. Turing, Finite approximations to Lie groups. 
Ann. of Math. (2) 39 (1938), no. 1, 10--111.

\bibitem[W73]{We} B.A.F Wehrfritz, Infinite linear groups. An account of the group-theoretic properties of infinite groups of matrices. Ergebnisse der Matematik und ihrer Grenzgebiete, Band 76. Springer-Verlag, New York-Heidelberg, 1973. 

\bibitem[Z90]{Zimmer} R.J. Zimmer, Essential Results of Functional Analysis, University of Chicago Press, Chicago, IL, 1990. 
\end{thebibliography}
\end{document}